\documentclass[12pt, titlepage]{amsart}

\usepackage{latexsym, color}
\usepackage[numbers]{natbib}
\usepackage{graphicx}

\usepackage{amsmath, fullpage, booktabs}
\usepackage[colorlinks=TRUE, citecolor=blue]{hyperref}

\allowdisplaybreaks

\usepackage{fullpage}
\usepackage{setspace}
\doublespacing

\usepackage[nolists, tablesfirst]{endfloat}


\newtheorem{Theorem}{Theorem}


\newcommand{\dif}{\mathrm{d}}

\begin{document}

\title{On Occupation Time for On-Off Processes with Multiple Off-States}

\author[C. Hu, V. Pozdnyakov and J. Yan]
{Chaoran Hu$^{1*}$, Vladimir Pozdnyakov$^{1}$,  and Jun Yan$^{1}$}

\thanks{\\
1. Department of Statistics, University of Connecticut, 215 Glenbrook Road, Storrs, CT 06269-4120\\
* Corresponding author. E-mail: {\tt chaoran.hu@uconn.edu}
}

\maketitle

\begin{abstract}

The need to model a Markov renewal on-off process with multiple off-states
arise in many applications such as economics, physics, and engineering.
Characterization of the occupation time of one specific off-state
marginally or two off-states jointly is crucial to understanding such
processes. We derive the exact marginal and joint distributions of the
off-state occupation times. The theoretical results are confirmed
numerically in a simulation study. A special case when all holding
times have L\'evy distribution is considered for the possibility of
simplification of the formulas.

\medskip
\noindent{\sc Keywords}: L\'evy distribution, Markov renewal process, Telegraph process


\end{abstract}


\section{Introduction}

Markov renewal on-off processes, also known as alternating renewal
processes or telegraph processes, arise in applications in a variety of fields.
Classic alternating renewal processes were first studied by
\citet{Cane:beha:1959}, \citet{Page:theo:1960} and \citet{Newman:1968}
with appxlications to animal ethology, maintenance of
electronic equipment, and communication engineering, respectively.
Recently, the telegraph process and their variations are employed in
various fields of applications such as pricing in mathematical finance
and insurance
\citep[e.g.,][]{Kolesnik:Ratanov:2013, DiCrescenzo:etal:2013,
  DiCrescenzo:etal:2014, DiCrescenzo:Zhacs:2015, Ratanov:2017},
modelling the propagation of a damped wave in physics
\citep[e.g.,][]{DeGregorio:etal:2012, Macci:2016}, and inventory and storage
models in engineering \citep{Xu:etal:2015}. In certain applications,
the off-state of an on-off process has multiple types, which raises
new questions about its properties such as the distribution of the
occupation time in specific off-state type marginally or multiple
off-state types jointly. Our interest in on-off processes with
multiple off-states
is motivated by our recent work in animal movement modeling,
where a predator can have different non-moving states such as
resting or handling a kill \citep{Pozd:etal:2017, Pozd:etal:2018}.
The occupation time in the handling state is important for ecologists
to understand the behavior of predators.

To fix ideas, consider a server that has two different types of
failures, each requiring
a different time and cost to repair. One basic question is: if the server
is on at time~0, what is the distribution of the time one spends fixing the
type one failures by time $t>0$?  We model the server state process with the
following Markov renewal process.
The process starts in state~0 (the on-state) and spend there a random holding
time according to a given absolutely continuous distribution. When the first holding
time is over, we flip an asymmetric coin to decide which type of failure (off-state 1
or off-state 2) comes next. The holding times in the off-states are also
absolutely continuous, generally with different distributions. Once the second holding
time is over the process returns to the on-state (state 0) and then we repeat the
construction process.

The occupation time of a specific type of the off-state is our focus.
For the on-state, the distribution of the occupation time is known from the
results on the telegraph process
\citep{Perr:Stad:Zack:firs:1999, DiCrescenzo:2001,
  Stad:Zack:tele:2004,  Zack:gene:2004}.
Indeed, if we collapse the two off-states into one, then the resulting
process is a regular on-off process (or alternating renewal process)
whose off-state holding time distribution is a mixture of the two original off-state
holding time distributions. If the different types of repair require different
resources, however, we want to know the distribution of the occupation time in
a particular type of off-state. For that task,  the results on classical telegraph
processes are not directly applicable.
Moreover, assume that the cost for being
in an off-state is proportional to its occupation time and different for different
off-states. If one wants to know the distribution of the total cost for repairs
of all kinds by a fixed time, then we need the joint distribution of
the two types of off-state occupation times.

Our problem is related to but different from some recent works.
An extension of the telegraph process to a process with three states
is studied in \citet{Bshouty:etal:2012}, where within a
renewal cycle all three states are visited in a given deterministic order.
In our case, however, we have an on-off process with two off-states;
only two states (the on-state and an off-state) are visited in each renewal
cycle, but the off-states is chosen randomly.
Another related work is \citet{Crimaldi:etal:2013}, where
there are two states, but at each random epoch, the new state is
determined by the outcome of a random trial, and, as a result, the
process can stay in the same state.

The key idea to study the occupation time of a specific off-state is to
exploit a certain periodicity of our Markov renewal process.
In fact, when analyzing any Markov renewal process, it is always convenient to
do the conditioning on returning to a certain state. For the on-off process
with two off-states, state~0 has an additional nice property.
The numbers of steps between two consecutive visits of
state~0 is not {\it random}, and it is always equal to~2.
We derive the marginal distribution of the occupation time of a
specific off-state first, and then we modify our derivation to obtain
the joint distribution of the two off-state occupation times.
Of course, one can get the marginal distribution by integrating the
joint one. Our approach, however, is easier to follow.

\section{Algorithmic Construction of On-Off Process with Two Off-States}

Suppose that we are given the following collection of independent sequences of
non-negative random variables:
\begin{enumerate}
\item $\{U_k\}_{k\geq 1}$ are independent identically distributed (iid) random
  variables with cumulative distribution function (cdf) $F_U$ and probability
  density function (pdf) $f_U$ (these random variables will be used
  as the holding times when the server is up, state~0);
\item $\{S_k\}_{k\geq 1}$ are iid random variables with cdf $F_S$ and pdf $f_S$
  (holding times when the server is down for short repairs, state~1);
\item $\{L_k\}_{k\geq 1}$ are iid random variables with cdf $F_L$ and pdf $f_L$
  (holding times when the server is down for long repairs, state~2);
\item $\{\xi_k\}_{k\geq 1}$ are iid random variables with $\Pr(\xi_k=1)=p_1>0$
  and $\Pr(\xi_k=0)=p_2=1-p_1$.
\end{enumerate}

Now, we present our construction of the on-off process with two off-states, $X(t)$.
\begin{enumerate}
\item Initialize with $X(0)=0$ and $T_0=0$.
\item For cycles $i = 1, 2, \ldots$:
  \begin{enumerate}
  \item
    Let $T_{2i - 1}=U_{i}+T_{2i-2}$, and $X(t)=0$ for all
    $t\in[T_{2i-2},  T_{2i - 1})$.
  \item
    If $\xi_i=1$ then $T_{2i}=T_{2i - 1} + S_i$, and $X(t)=1$ for all
    $t\in[T_{2i - 1}, T_{2i})$;
    otherwise, $T_{2i}=T_{2i - 1}+L_i$, and $X(t)=2$ for all $t\in[T_{2i - 1}, T_{2i})$.
  \end{enumerate}
\end{enumerate}

Then the occupation times in state~0, state~1, and state~2 are, respectively,
\begin{align*}\label{occupation*times}
U(t)&=\int_0^t1_{\{X(s)=0\}}\dif s,\\
S(t)&=\int_0^t1_{\{X(s)=1\}}\dif s,\\
L(t)&=\int_0^t1_{\{X(s)=2\}}\dif s.
\end{align*}
The corresponding defective marginal densities of $S(t)$ and $L(t)$ are denoted as
\begin{align*}
 p_{Sj}(s, t) &=  \Pr(S(t) \in \dif s,X(t)=j)/\dif s, \\
 p_{Lj}(s, t) &=  \Pr(L(t) \in \dif s,X(t)=j)/\dif s,
\end{align*}
where $t \geq 0$, $0<s<t$, $j = 0,1,2$.
The defective joint two-dimensional density of $S(t)$ and $L(t)$ for
$u,v>0$, $u+v<t$ is denoted as
\begin{equation*}
 p_{SLj}(u, v, t) =  \frac{1}{\dif u \dif v} \Pr(S(t) \in \dif u, L(t) \in \dif v, X(t)=j),
\end{equation*}
where $j = 0,1,2$.
The densities are defective in the following sense: since both
occupation times $S(t)$ and $L(t)$ have an atom at 0, all three sums
\begin{equation*}
\sum_{j=0}^2\int_0^t p_{Sj}(s, t)\dif s, \quad \sum_{j=0}^2\int_0^t p_{Lj}(s, t)\dif s,
\end{equation*}
and
\begin{equation*}
\sum_{j=0}^2\iint_{u,v>0,\, u+v<t} p_{SLj}(u,v,t)\dif u \dif v
\end{equation*}
are less than~1.

\section{Marginal Distribution of Occupation Time of an Off-State}

To derive the distribution of occupation time we will need some auxiliary
random variables. Let $N(t)$ be the number of cycles (or returns to the on-state)
by time $t$. Formally, for $n\geq 0$
\begin{equation*}\label{number*of&cycles}
N(t)=n\quad\quad\mbox{ iff }\quad\quad T_{2n}\leq t<T_{2n+2}.
\end{equation*}
Finally, let $D_k=\xi_k S_k+(1-\xi_k)L_k$. These random variables are associated
with the off-state holding time of the regular on-off process when two off-states
are combined.

Our formulas include convolutions of different distributions. We will use the
following notation. If we are given a cdf $G(\cdot)$ and its pdf $g(\cdot)$,
then $G^{(n)}(\cdot)$ denotes $n$-fold convolution of $G(\cdot)$, and
$g^{(n)}(\cdot)$ denotes the $n$-fold convolution of $g(\cdot)$. If we are
given two cdfs $G(\cdot)$ and $H(\cdot)$ with pdfs $g(\cdot)$ and $h(\cdot)$,
then $G*H(\cdot)$ denotes the convolution of cdfs $G(\cdot)$ and $H(\cdot)$,
and $g*h(\cdot)$ denotes the convolution of pdfs $g(\cdot)$ and $h(\cdot)$.

We also use the following conventions. Any summation over the empty set is~0.
Zero-fold convolution
$G^{(0)}(\cdot)$ is the cdf of a random variable that is equal to~0 with probability~1.
Finally, $g^{(k)}*h^{(0)}(\cdot)=g^{(k)}(\cdot)$ for any $k\geq 1$.

\begin{Theorem}\label{occupation*time}
Let $t\geq 0$ and $0<s<t$. Then
\begin{align*}
\Pr(S(t)=0,X(t)=0)&=\sum_{n=0}^\infty\left[F_U^{(n)}*F_L^{(n)}(t)-F_U^{(n+1)}*F_L^{(n)}(t)\right]p_2^n,\\
\Pr(S(t)=0,X(t)=1)&=0,\\
\Pr(S(t)=0,X(t)=2)&=\sum_{n=0}^\infty\left[F_U^{(n+1)}*F_L^{(n)}(t)-F_U^{(n+1)}*F_L^{(n+1)}(t)\right]p_2^{n+1},
\end{align*}
and
\begin{align*}
p_{S0}(s, t)&=\sum_{n=1}^\infty\sum_{k=1}^n\binom{n}{k}p_1^kp_2^{n-k}f_S^{(k)}(s)\left[F_U^{(n)}*F_L^{(n-k)}(t-s)-F_U^{(n+1)}*F_L^{(n-k)}(t-s)\right],\\
p_{S1}(s, t)&=\sum_{n=0}^\infty\sum_{k=0}^n \binom{n}{k}p_1^{k+1}p_2^{n-k}f_U^{(n+1)}*f_L^{(n-k)}(t-s)\left[F_S^{(k)}(s)-F_S^{(k+1)}(s)\right],\\
p_{S2}(s, t)&=\sum_{n=1}^\infty\sum_{k=1}^n\binom{n}{k}p_1^kp_2^{n-k+1}f_S^{(k)}(s)\left[F_U^{(n+1)}*F_L^{(n-k)}(t-s)-F_U^{(n+1)}*F_L^{(n-k+1)}(t-s)\right].
\end{align*}
\end{Theorem}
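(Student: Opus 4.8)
The plan is to decompose each probability along the renewal structure, conditioning on the number of completed cycles $N(t)=n$, the current state $X(t)=j$, and the number $k$ of the first $n$ cycles that visited state~1. The decisive feature is the periodicity stressed in the introduction: every cycle is exactly one on-period followed by exactly one off-period, so once the coin outcomes $\xi_1,\dots,\xi_{n+1}$ are fixed, the epochs $T_{2n}$, $T_{2n+1}$, $T_{2n+2}$ become deterministic sums of holding times whose laws are explicit convolutions. If $k$ of the first $n$ off-periods are state~1, then $T_{2n}$ is a sum of $n$ copies of $U$, $k$ copies of $S$, and $n-k$ copies of $L$, with analogous expressions for $T_{2n+1}$ and $T_{2n+2}$. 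Because the four input sequences are mutually independent and iid within type, the joint density factors across the three holding-time families, and by exchangeability each of the $\binom{n}{k}$ choices of which cycles are state~1 contributes identically; this yields the binomial weight $\binom{n}{k}p_1^{k}p_2^{n-k}$, with an extra $p_1$ or $p_2$ when the current off-period is forced to be state~1 ($j=1$) or state~2 ($j=2$).

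The key simplification I would exploit is to split $t$ into the state-1 occupation $S(t)=s$ and the complementary ``non-state-1'' time $t-s$. The latter is built only from on-periods and state-2 off-periods, hence is independent of every state-1 holding time, so each density factors as a term in $s$ from the $S$-holding times times a term in $t-s$ from a $U/L$ convolution. For the three atoms I would set $k=0$ (no state-1 cycle has occurred), so $S(t)=0$, and the event $\{S(t)=0,X(t)=j\}$ reduces to a pure $U/L$ renewal process occupying the appropriate phase at time $t$; its probability is a difference of two convolved cdfs evaluated at $t$, giving the three stated atoms after summing over $n$. The identity $\Pr(S(t)=0,X(t)=1)=0$ is immediate, since occupying state~1 forces $S(t)>0$.

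For the densities with $s>0$ I would handle the current state case by case. When $X(t)=0$ or $X(t)=2$ the state-1 contribution is just the sum $\sigma\sim f_S^{(k)}$ of the $k$ completed state-1 holding times, so $S(t)=\sigma$ and the $s$-factor is simply $f_S^{(k)}(s)$; the remaining factor is the probability that the $U/L$ renewal process sits in an on-period (for $j=0$) or a state-2 off-period (for $j=2$) at the non-state-1 time $t-s$, which is again a difference of two convolution cdfs. Summing over $n\geq 1$ and $1\leq k\leq n$ reproduces $p_{S0}$ and $p_{S2}$.

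The main obstacle, and the only case needing genuine care, is $X(t)=1$, where the current state-1 period is only partially elapsed. Here $S(t)=\sigma+\tau$, with $\sigma\sim f_S^{(k)}$ the completed state-1 time and $\tau$ the time already spent in the current holding $S_{n+1}$, subject to $0<\tau<S_{n+1}$. The clean observation is that the non-state-1 time equals $t-s$ exactly, so conditioning on it fixes the factor $f_U^{(n+1)}*f_L^{(n-k)}(t-s)$; it then remains to integrate out $\sigma$ and the current holding against the constraint $S_{n+1}>s-\sigma$, which gives
\[
\int_0^s f_S^{(k)}(\sigma)\,[1-F_S(s-\sigma)]\,\dif\sigma=F_S^{(k)}(s)-F_S^{(k+1)}(s),
\]
valid for all $k\geq 0$ under the convention that $F_S^{(0)}$ is the unit step. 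Recognizing this tail integral as the advertised difference is the heart of the argument; multiplying by the weight $\binom{n}{k}p_1^{k}p_2^{n-k}$, the factor $p_1$ for $\xi_{n+1}=1$, and summing over $n\geq 0$, $0\leq k\leq n$ then yields $p_{S1}(s,t)$. I would close by noting that every interchange of summation and integration is justified by nonnegativity (Tonelli), so the series and integrals may be manipulated freely.
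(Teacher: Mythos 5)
Your proposal is correct and follows essentially the same route as the paper: conditioning on $N(t)=n$ and the number $k$ of type-1 failures, using exchangeability of the $\xi_j$ to produce the binomial weight, and factoring the state-1 holding times from the $U/L$ part by independence, with your tail integral $\int_0^s f_S^{(k)}(\sigma)[1-F_S(s-\sigma)]\,\dif\sigma = F_S^{(k)}(s)-F_S^{(k+1)}(s)$ being exactly the paper's identity $\Pr\bigl(\sum_{j=1}^k S_j\leq s,\ \sum_{j=1}^k S_j+S_{n+1}>s\bigr)=F_S^{(k)}(s)-F_S^{(k+1)}(s)$ written in integral form. The only cosmetic differences are that you phrase the factorization as splitting off the ``non-state-1'' time and you invoke Tonelli explicitly, neither of which changes the argument.
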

\begin{proof}
First, note that the distribution of $S(t)$ has an atom. Indeed, if $U_1>t$ or
all the failures that occur before $t$ are of the second type, then $S(t)=0$.
More specifically, by conditioning on the number of returns to the on-state,
$N(t)$, we obtain that
\begin{align*}
  &\phantom{=\;} \Pr(S(t)=0,X(t)=0)\\
  &=\sum_{n=0}^\infty \Pr\left(S(t)=0,X(t)=0, N(t)=n\right)\\
  &=\Pr(U_1>t)+\sum_{n=1}^\infty \Pr\left(\sum_{j=1}^n(U_j+L_j)\leq t,\sum_{j=1}^n(U_j+L_j)+U_{n+1}>t, \sum_{j=1}^n\xi_j=0\right)\\
  &=\Pr(U_1>t)+\sum_{n=1}^\infty \Pr\left(\sum_{j=1}^n(U_j+L_j)\leq t,\sum_{j=1}^n(U_j+L_j)+U_{n+1}>t\right)p_2^n\\
  &=\Pr(U_1>t)+\sum_{n=1}^\infty\left[\Pr\left(\sum_{j=1}^nU_j+\sum_{j=1}^n L_j\leq t\right)-\Pr\left(\sum_{j=1}^{n+1}U_j+ \sum_{j=1}^n L_j\leq t \right) \right]p_2^n\\
  &=(1-F_U(t))+\sum_{n=1}^\infty\left[F_U^{(n)}*F_L^{(n)}(t)-F_U^{(n+1)}*F_L^{(n)}(t)\right]p_2^n\\
  &=\sum_{n=0}^\infty\left[F_U^{(n)}*F_L^{(n)}(t)-F_U^{(n+1)}*F_L^{(n)}(t)\right]p_2^n.
\end{align*}

Next, again by conditioning on $N(t)$, we get that for $0<s<t$
\begin{equation*}
\Pr(S(t)\in \dif s,X(t)=0)=\sum_{n=1}^\infty \Pr\left(S(t)\in \dif s,X(t)=0, N(t)=n\right).
\end{equation*}
Note that the summation starts from 1, because $X(t)=0$ and $N(t)=0$ implies
that $U_1>t$, and, therefore, $S(t)=0$.

The next step is to fix the number of switches to failures of type~1. Since the
total numbers of switches is $n$ and there is at least one failure of type~1, we have
\begin{equation*}
\Pr(S(t)\in \dif s,X(t)=0, N(t)=n)=\sum_{k=1}^n \Pr\left(S(t)\in \dif s,X(t)=0, N(t)=n,\sum_{j=1}^n\xi_j=k\right).
\end{equation*}
Because
\begin{equation*}X(t)=0, N(t)=n \quad\quad\mbox{ iff }\quad\quad \sum_{j=1}^n(U_j+D_j)\leq t, \sum_{j=1}^n(U_j+D_j)+U_{n+1}>t,
\end{equation*}
and it really does not matter during which cycles the switches to failures of
type 1 occur, we find that
\begin{align*}
& \quad \Pr\Big(S(t)\in \dif s,X(t)=0, N(t)=n,\sum_{j=1}^n\xi_j=k\Big)\\
&=\Pr\left(S(t)\in \dif s,\sum_{j=1}^n(U_j+D_j)\leq t, \sum_{j=1}^n(U_j+D_j)+U_{n+1}>t,\sum_{j=1}^n\xi_j=k\right)\\
&=\binom{n}{k}\Pr\left(S(t)\in \dif s,\sum_{j=1}^n(U_j+D_j)\leq t, \sum_{j=1}^n(U_j+D_j)+U_{n+1}>t,\sum_{j=1}^k\xi_j=k,\sum_{j=k+1}^n\xi_j=0\right).
\end{align*}
Next, observe that in this case $S(t)=\sum_{j=1}^kS_j$. Using independence between
$\{\xi_k\}_{k\geq 1}$ and the holding time sequences we have
\begin{align*}
&\quad \Pr\Big(S(t)\in \dif s,X(t)=0, N(t)=0,\sum_{j=1}^n\xi_j=k\Big)\\
&=\binom{n}{k}p_1^kp_2^{n-k}\Pr\left(\sum_{j=1}^kS_j\in \dif s,\sum_{j=1}^nU_j+\sum_{j=1}^k S_j+ \sum_{j=k+1}^n L_j\leq t, \sum_{j=1}^{n+1}U_j+\sum_{j=1}^k S_j+ \sum_{j=k+1}^n L_j>t\right)\\
&=\binom{n}{k}p_1^kp_2^{n-k}\Pr\left(\sum_{j=1}^kS_j\in \dif s,\sum_{j=1}^nU_j+\sum_{j=k+1}^n L_j\leq t-s, \sum_{j=1}^{n+1}U_j+ \sum_{j=k+1}^n L_j>t-s\right).
\end{align*}
Finally, independence of holding time sequences gives us that
\begin{align*}
& \quad \Pr\Big(\sum_{j=1}^kS_j\in \dif s,\sum_{j=1}^nU_j+\sum_{j=k+1}^n L_j\leq t-s, \sum_{j=1}^{n+1}U_j+ \sum_{j=k+1}^n L_j>t-s\Big)\\
&=\Pr\left(\sum_{j=1}^kS_j\in \dif s\right)\left[\Pr\left(\sum_{j=1}^nU_j+\sum_{j=k+1}^n L_j\leq t-s\right)-\Pr\left(\sum_{j=1}^{n+1}U_j+ \sum_{j=k+1}^n L_j\leq t-s \right) \right]\\
&=f_S^{(k)}(s)\left[F_U^{(n)}*F_L^{(n-k)}(t-s)-F_U^{(n+1)}*F_L^{(n-k)}(t-s)\right]\dif s.
\end{align*}

Now let us consider the case when $X(t)=1$ (at time $t$ the server is
down for a short repair). One difference is that there is no atom in this case.
As before, for $0<s<t$ we have
\begin{align*}
  &\phantom{=\;} \Pr(S(t)\in \dif s,X(t)=1)\\
  &=\sum_{n=0}^\infty \Pr\left(S(t)\in \dif s,X(t)=1, N(t)=n\right)\\
  &=\sum_{n=0}^\infty\sum_{k=0}^n \Pr\left(S(t)\in \dif s,X(t)=1, N(t)=n,\sum_{j=1}^n\xi_j=k, \xi_{n+1}=1\right)\\
  &=\sum_{n=0}^\infty\sum_{k=0}^n \binom{n}{k}\Pr\left(S(t)\in \dif s,X(t)=1, N(t)=n,\sum_{j=1}^k\xi_j=k, \sum_{j=k+1}^n\xi_j=0, \xi_{n+1}=1\right).
\end{align*}
Note that event
\begin{equation*}
\Big\{S(t)\in \dif s,X(t)=1, N(t)=n,\sum_{j=1}^k\xi_j=k, \sum_{j=k+1}^n\xi_j=0, \xi_{n+1}=1\Big\}
\end{equation*}
can be rewritten as
\begin{align*}
\Big\{S(t)\in \dif s, \; &\sum_{j=1}^{n+1}U_j+\sum_{j=1}^k S_j+ \sum_{j=k+1}^n L_j\leq t, \\
                     &\sum_{j=1}^{n+1}U_j+\sum_{j=1}^k S_j+ \sum_{j=k+1}^n
                       L_j+S_{n+1}> t,
                       \; \sum_{j=1}^k\xi_j=k,
                       \; \sum_{j=k+1}^n\xi_j=0, \xi_{n+1}=1\Big\}.
\end{align*}
Since $S(t)=t-\sum_{j=1}^{n+1}U_j- \sum_{j=k+1}^n L_j$, we finally obtain
\begin{align*}
&\quad \Pr(S(t)\in  \dif s,X(t)=1)\\
&=\sum_{n=0}^\infty\sum_{k=0}^n \binom{n}{k}p_1^{k+1}p_2^{n-k}\Pr\left(\sum_{j=1}^{n+1}U_j+\sum_{j=k+1}^n L_j\in t- \dif s,\sum_{j=1}^k S_j\leq s,\sum_{j=1}^k S_j+S_{n+1}>s \right)\\
&=\sum_{n=0}^\infty\sum_{k=0}^n \binom{n}{k}p_1^{k+1}p_2^{n-k}f_U^{(n+1)}*f_L^{(n-k)}(t-s)\left[F_S^{(k)}(s)-F_S^{(k+1)}(s)\right]\dif s.
\end{align*}
The other formulas can be derived in a similar way.
\end{proof}

Theorem~\ref{occupation*time} gives us the distribution of occupation time in
state~1. Since both off-states enter our story in a completely symmetric way,
the formulas for the occupation time in state~2
can be obtained by interchanging state~1 and~2 in Theorem~\ref{occupation*time}.

\section{Joint Distribution of Off-State Occupation Times}

As mentioned in the introduction, if we are interested in the
distribution of the total cost, then
we need the joint distribution of off-state occupation times.
More specifically, assume that the total cost
$C(t)$ is a linear function of occupation times, that is,
\begin{equation*}
C(t) = C_0 U(t) + C_1 S(t) + C_2 L(t).
\end{equation*}
Then the distribution of $C(t)$ is fully determined by the joint
distribution of $S(t)$ and $L(t)$. Note
that we do not need the joint distribution of all three occupation
times, because $U(t)=t-S(t)-L(t)$.

Since both $S(t)$ and $L(t)$ have atoms at 0, for every value of $X(t)$ we have four cases:
(1) both occupation times are~0;
(2) $S(t)=0$, $L(t)>0$;
(3) $S(t)>0$, $L(t)=0$;
and (4) both $S(t)$ and $L(t)$ are strictly greater than 0.
In total, we have 12~formulas. Some of them are trivial.
For instance, event $\{S(t)=0, X(t)=1\}$ has
probability~0, therefore, the corresponding defective one-dimensional
density of $L(t)$ is also~0. Moreover,
since there is a certain symmetry between $S(t)$ and $L(t)$ some
formulas can be found by interchanging state~1 and~2.
That is why in the following theorem below we have only 5~formulas.

\begin{Theorem}\label{joint*occupation*time}
Let $u,v>0$ and $u+v<t$. Then
\begin{align*}
\Pr(S(t)=0,L(t)=0,X(t)=0)&=1-F_U(t),\\
\Pr(S(t)=0,L(t)\in \dif v,X(t)=0)/\dif v&=\sum_{n=1}^\infty f_L^{n}(v)\left[F_U^{(n)}(t-v)-F_U^{(n+1)}(t-v)\right]p_2^n,\\
\Pr(S(t)\in \dif u, L(t)=0, X(t)=1)/\dif u&=\sum_{n=0}^\infty p_1^{n+1}f_U^{(n+1)}(t-u)\left[F_S^{(n)}(u)-F_S^{(n+1)}(u)\right],
\end{align*}
and
\begin{align*}
p_{SL0}(u, v, t)&=\sum_{n=1}^\infty\sum_{k=1}^{n-1}\binom{n}{k}p_1^kp_2^{n-k}f_S^{(k)}(u)f_L^{(n-k)}(v)\left[F_U^{(n)}(t-u-v)-F_U^{(n+1)}(t-u-v)\right],\\
p_{SL1}(u, v, t)&=\sum_{n=0}^\infty\sum_{k=0}^{n-1} \binom{n}{k}p_1^{k+1}p_2^{n-k}f_U^{(n+1)}(t-u-v)f_L^{(n-k)}(v)\left[F_S^{(k)}(u)-F_S^{(k+1)}(u)\right].
\end{align*}
\end{Theorem}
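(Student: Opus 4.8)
The plan is to mirror the proof of Theorem~\ref{occupation*time}, exploiting the fact that the only structural change is that we now pin down both occupation times simultaneously. As there, the starting point is to condition on the number of completed cycles $N(t)=n$ and then on the number $\sum_{j=1}^n\xi_j=k$ of short (type-1) failures among those $n$ cycles. Exchangeability of the coin tosses lets me assume the first $k$ cycles are short and the remaining $n-k$ are long, at the cost of a factor $\binom{n}{k}$, and independence of the four driving sequences then factorizes each joint probability into a product of a coin factor (a power of $p_1$ times a power of $p_2$), an $S$-factor, an $L$-factor, and a $U$-factor. The crucial difference from the marginal computation is that conditioning on $L(t)=v$ as well as $S(t)=u$ prevents the $U$- and $L$-sums from being convolved together: where the marginal density produced $f_U^{(n+1)}*f_L^{(n-k)}(t-s)$, the joint density instead splits into separate factors $f_U^{(n+1)}(t-u-v)$ and $f_L^{(n-k)}(v)$.

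I would handle the five formulas in order of increasing difficulty. The first three are the atom and boundary cases. For $\Pr(S(t)=0,L(t)=0,X(t)=0)$ I would observe that having no failure of either type while sitting in state~0 forces $U_1>t$, giving $1-F_U(t)$ immediately. For the one-dimensional defective densities $\Pr(S(t)=0,L(t)\in\dif v,X(t)=0)/\dif v$ and $\Pr(S(t)\in\dif u,L(t)=0,X(t)=1)/\dif u$, the condition $S(t)=0$ (respectively $L(t)=0$) forces every completed failure to be long (respectively short), so the inner sum over $k$ collapses to the single term $k=0$ (respectively $k=n$); the on-state constraint then reduces to $\sum_{j=1}^n U_j\le t-v<\sum_{j=1}^{n+1}U_j$, which is exactly $F_U^{(n)}(t-v)-F_U^{(n+1)}(t-v)$, while the analogous short-repair constraint for state~1 yields the increment $F_S^{(n)}(u)-F_S^{(n+1)}(u)$.

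The two genuinely two-dimensional densities $p_{SL0}$ and $p_{SL1}$ are the heart of the argument. For $p_{SL0}$ (in state~0 at time~$t$) both $S(t)>0$ and $L(t)>0$ demand at least one short and one long completed failure, which forces $1\le k\le n-1$; here $S(t)=\sum_{j=1}^k S_j=u$ and $L(t)=\sum_{j=k+1}^n L_j=v$ contribute $f_S^{(k)}(u)$ and $f_L^{(n-k)}(v)$, while the on-state constraint $\sum_{j=1}^n U_j\le t-u-v<\sum_{j=1}^{n+1}U_j$ supplies $F_U^{(n)}(t-u-v)-F_U^{(n+1)}(t-u-v)$. I expect the main obstacle to be $p_{SL1}$, where the process is in the middle of a short repair at time~$t$: I would need to show that the elapsed part of the current repair combines with the completed short repairs so that $S(t)=t-\sum_{j=1}^{n+1}U_j-v$, whence the completed $S_j$'s cancel and conditioning on $S(t)=u$ pins the on-times through $f_U^{(n+1)}(t-u-v)$ rather than through the $S$-sequence. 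The surviving $S$-constraint, that the current repair has begun but not finished, becomes $\sum_{j=1}^k S_j\le u<\sum_{j=1}^{k+1}S_j$ and produces the factor $F_S^{(k)}(u)-F_S^{(k+1)}(u)$; requiring $L(t)=v>0$ forces $n-k\ge 1$, i.e.\ $0\le k\le n-1$. Once this bookkeeping is correct, assembling the coin factor $p_1^{k+1}p_2^{n-k}$ and the multiplicity $\binom{n}{k}$ gives the stated series. The cases not appearing in the statement---those for $X(t)=2$ and the reflected configurations---then follow from the symmetry between states~1 and~2 noted after Theorem~\ref{occupation*time}, which is why only five formulas are listed.
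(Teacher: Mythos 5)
Your proposal is correct and follows essentially the same route as the paper's proof: condition on $N(t)=n$ and the number $k$ of type-1 failures, use exchangeability to justify the factor $\binom{n}{k}$, factor by independence, and for $X(t)=1$ identify $S(t)=t-\sum_{j=1}^{n+1}U_j-v$ so that conditioning on $S(t)=u$ produces $f_U^{(n+1)}(t-u-v)$ while the ongoing-repair constraint yields $F_S^{(k)}(u)-F_S^{(k+1)}(u)$. The only difference is one of presentation: the paper writes out only the $p_{SL1}$ case in detail and leaves the rest as ``similar modifications,'' whereas you sketch all five, with correct bookkeeping of the ranges of $k$ in each.
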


\begin{proof}
We will only derive the formula for $p_{SL1}(u, v, t)$. The remaining formulas can be obtained by similar modifications of the proof of Theorem~\ref{occupation*time}.

As before, we start with partitioning with respect to events $\{N(t)=n\}$.
More specifically, for $u,v>0$ and $0<u+v<t$ we have
\begin{align*}
  & \phantom{=\;} p_{SL1}(u, v, t)\dif u \dif v \\
  &=\sum_{n=0}^\infty \Pr\left(S(t)\in \dif u, L(t)\in \dif v, X(t)=1, N(t)=n\right)\\
  &=\sum_{n=0}^\infty\sum_{k=0}^{n-1} \Pr\left(S(t)\in \dif u, L(t)\in \dif v, X(t)=1, N(t)=n,\sum_{j=1}^n\xi_j=k, \xi_{n+1}=1\right)\\
  &=\sum_{n=0}^\infty\sum_{k=0}^{n-1} \binom{n}{k}\Pr\left(S(t)\in \dif u, L(t)\in \dif v,  X(t)=1, N(t)=n,\sum_{j=1}^k\xi_j=k, \sum_{j=k+1}^n\xi_j=0, \xi_{n+1}=1\right).
\end{align*}
Note that  now the upper limit of the inner summation is $n-1$, because $\sum_{j=1}^n\xi_j=n$ and $\xi_{n+1}=1$ implies that $L(t)=0$.

Next, observe that event
\begin{equation*}
\Big\{S(t)\in \dif u, L(t)\in \dif v, X(t)=1, N(t)=n,\sum_{j=1}^k\xi_j=k, \sum_{j=k+1}^n\xi_j=0, \xi_{n+1}=1\Big\}
\end{equation*}
can be rewritten as
\begin{align*}
\Big\{ &S(t)\in \dif u, L(t)\in \dif v, \; \sum_{j=1}^{n+1}U_j+\sum_{j=1}^k S_j+ \sum_{j=k+1}^n L_j\leq t, \\
       &\sum_{j=1}^{n+1}U_j+\sum_{j=1}^k S_j+ \sum_{j=k+1}^n
         L_j+S_{n+1}> t,
         \; \sum_{j=1}^k\xi_j=k,
         \; \sum_{j=k+1}^n\xi_j=0, \xi_{n+1}=1\Big\}.
\end{align*}
Finally, taking into account that in the case when $\{X(t)=1\}$ occupation time $S(t)=t-\sum_{j=1}^{n+1}U_j- \sum_{j=k+1}^n L_j$ and occupation time $L(t)=\sum_{j=k+1}^n L_j$, we get that
\begin{align*}
  & \phantom{=\;} p_{SL1}(u, v, t) \dif u \dif v \\
  &=\sum_{n=0}^\infty\sum_{k=0}^{n-1}
    \binom{n}{k}p_1^{k+1}p_2^{n-k}
    \Pr\left( \sum_{j=1}^{n+1}U_j \in t- \dif u-\dif v,\sum_{j=k+1}^n
                L_j \in \dif v, \sum_{j=1}^k S_j\leq u,\sum_{j=1}^k S_j+S_{n+1}>u \right)\\
  &=\sum_{n=0}^\infty\sum_{k=0}^{n-1} \binom{n}{k}p_1^{k+1}p_2^{n-k}f_U^{(n+1)}(t-u-v)f_L^{(n-k)}(v)\left[F_S^{(k)}(u)-F_S^{(k+1)}(u)\right]\dif u \dif v.
\end{align*}
\end{proof}

One can verify now that, for instance,
\begin{equation*}
p_{S1}(u,t)= \sum_{n=0}^\infty p_1^{n+1}f_U^{(n+1)}(t-u)\left[F_S^{(n)}(u)-F_S^{(n+1)}(u)\right]+\int_0^{t - u} p_{SL1}(u,v,t)\dif v.
\end{equation*}
Using symmetry between $S(t)$ and $L(t)$, we also can get that
\begin{equation*}
p_{SL2}(u, v, t)=\sum_{n=0}^\infty\sum_{k=0}^{n-1} \binom{n}{k}p_2^{k+1}p_1^{n-k}f_U^{(n+1)}(t-u-v)f_S^{(n-k)}(u)\left[F_L^{(k)}(v)-F_L^{(k+1)}(v)\right].
\end{equation*}

\section{Special Case: L\'evy Distribution}
In this section we consider a special case when all the holding times have the L\'evy
distribution (with location parameter~0 and possibly different scale parameters).
The L\'evy distribution with scale parameter $c^2$ has pdf
\begin{equation*}
g_c(x)=\frac{c}{\sqrt{2\pi}}\frac{e^{-\frac{c^2}{2x}}}{x^{3/2}},\quad x>0,
\end{equation*}
and cdf
\begin{equation*}
G_c(x)=\frac{2}{\sqrt\pi} \int_{\frac{c}{\sqrt{2x}}}^\infty e^{-t^2}\dif t, \quad x>0.
\end{equation*}
The L\'evy distribution is heavy-tailed with infinite expectation. The median is given by $0.5 c^2 \left( \mathtt{erfc}^{-1}(0.5) \right)^{-2} \approx 2.198112 c^2$, where
$\mathtt{erfc}$ is the complementary error function:
\begin{equation*}
\mathtt{erfc}(x)=\frac{2}{\sqrt\pi} \int_x^\infty e^{-t^2}\, \dif t.
\end{equation*}

\begin{figure}[tbp]
\centering
\includegraphics[width=\textwidth]{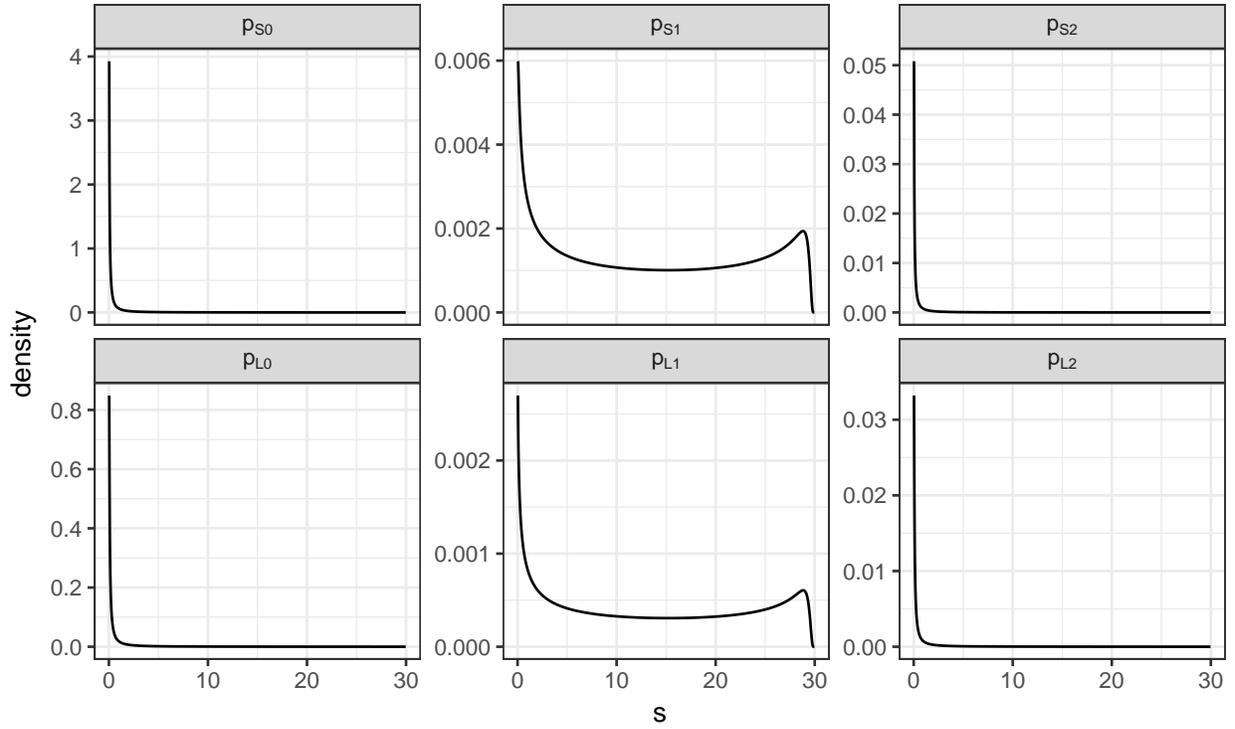}
\caption{Defective densities $p_{Sj}(s,t)$ and $p_{Lj}(s,t)$, $j \in \{0, 1, 2\}$, with $t = 30$, $c_U \approx 1.785$, $c_S \approx 0.097$, $c_L \approx 0.275$, and $p_1 = 0.9$.}
\label{fig:density}
\end{figure}

\begin{figure}[tbp]
\centering
\includegraphics[width=\textwidth]{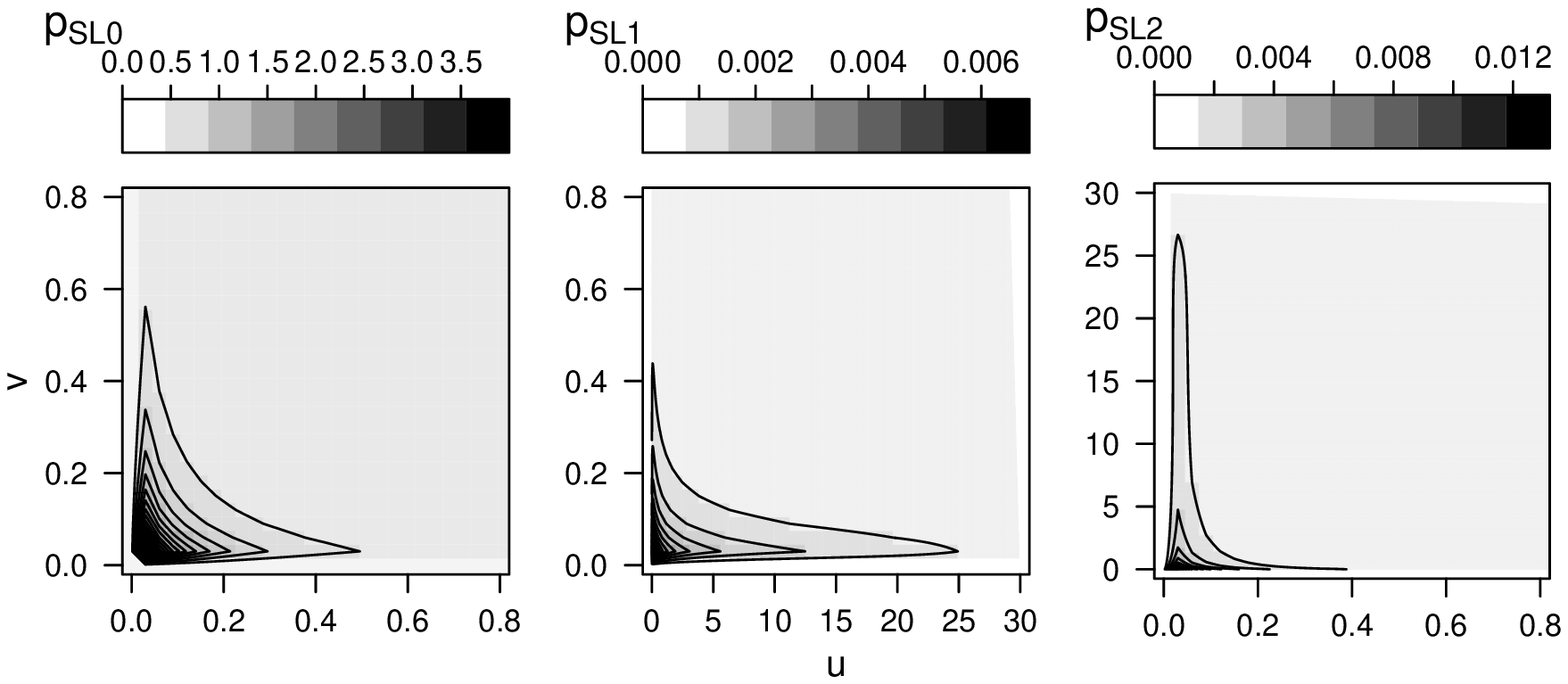}
\caption{Contour plots of the joint densities $p_{SLj}(u,v,t)$, $j \in  \{0, 1, 2\}$, with $t = 30$, $c_U \approx 1.785$, $c_S \approx 0.097$, $c_L \approx 0.275$, and $p_1 = 0.9$.}
\label{fig:joint}
\end{figure}

The parametrization that we use is a bit unusual, but it allows us to
shorten our notation for convolutions. More specifically,  as a member
of the family of stable distributions with mobility parameter~1/2, the
L\'evy distribution is closed under the operation of convolution in
the following way:
\begin{equation*}
G_{c_1}*G_{c_2}(x)=G_{c_1+c_2}(x).
\end{equation*}

Let $c_U^2$, $c_S^2$, and $c_L^2$ be the scale parameters for states
0, 1 and 2, respectively.
Then the formulas from Theorem~\ref{occupation*time} and
Theorem~\ref{joint*occupation*time}
{\it do not involve} any convolutions.
For instance, in this case we have that
\begin{equation*}
  p_{S2}(s, t) =\sum_{n=1}^\infty\sum_{k=1}^n\binom{n}{k}p_1^kp_2^{n-k+1}g_{kc_S}(s)\left[G_{(n+1)c_U+(n-k)c_L}\left(t-s\right)- G_{(n+1)c_U+(n-k+1)c_L}\left(t-s\right)\right].
\end{equation*}
The expectations and variances  of the off times is given by
\begin{align*}
  \mathrm{E}(S(t)) &= \sum_{j = 0}^2 \int_0^t  sp_{Sj}(s,t) \dif s,\quad \mathrm{Var}(S(t))= \sum_{j = 0}^2 \int_0^t  s^2p_{Sj}(s,t) \dif s - [\mathrm{E}(S(t))]^2\\
  \mathrm{E}(L(t)) &= \sum_{j = 0}^2\int_0^t  sp_{Lj}(s,t) \dif s, \quad \mathrm{Var}(L(t))= \sum_{j = 0}^2 \int_0^t  s^2p_{Lj}(s,t) \dif s - [\mathrm{E}(L(t))]^2.
\end{align*}
Note that the discrete component of the occupation times is not used
for these calculations, because the atoms are at~0.
The covariance of $S(t)$ and $L(t)$ is given by
\begin{equation*}
\mathrm{Cov}(S(t),L(t)) = \sum_{j=0}^2\iint_{u,v>0,\, u+v<t} uv p_{SLj}(u,v,t)\dif u \dif v -\mathrm{E}(S(t))\mathrm{E}(L(t)).
\end{equation*}

As an example we consider a server with median holding time 7~days, 0.5~hour,
and 4~hours for the on-state, the short off-state, and the long off-state,
respectively.  Using one day as the time unit,
the model parameters are: $c_U \approx 1.785$, $c_S \approx 0.097$,
and $c_L \approx 0.275$.
Assume also that the total repair cost is given by
\begin{equation*}
C(t)=S(t)+2L(t),
\end{equation*}
that is, long repairs are twice costlier than the short ones.

Figure~\ref{fig:density} presents defective marginal densities
$p_{Sj}(s,t)$ and $p_{Lj}(s,t)$ when $t = 30$ days and $p_1=.9$ (that is, the
less serious breakdowns occur 9~times more often).
The marginal densities for both occupation times are severely defected
when $X(30) = 0$ (the process is in the on-state).
As we mentioned above  $S(30)$ and $L(30)$ have atoms at $s=0$:
\begin{align*}
  \Pr(S(30)=0,X(30)=0)&\approx 0.280;    &   \Pr(L(30)=0,X(30)=0) &\approx 0.798;\\
  \Pr(S(30)=0,X(30)=1)&\approx 0.000;    &   \Pr(L(30)=0,X(30)=1) &\approx 0.034;\\
  \Pr(S(30)=0,X(30)=2)&\approx 0.004;    &   \Pr(L(30)=0,X(30)=2) &\approx 0.000.
\end{align*}

Figure~\ref{fig:joint} shows the contour plot of defective joint densities
$p_{SLj}(u,v,t)$ with the same parameter setup, where the negative
association of the two occupation times is obvious.
We also ran a simulation study to numerically confirm the correctness of
Theorem~\ref{occupation*time} and Theorem~\ref{joint*occupation*time}.
A total $1,000,000$ realizations of the above
on-off process were generated and the empirical results are consistent
with the theoretical densities (not shown).

Table~\ref{tab:expc} summarizes the expectation and variance of the
two types of off-state occupation times and the associated repair
cost for $t \in \{30, 60\}$ days and
$p_1 \in \{0.70, 0.75, \dots, 0.95, 0.99\}$.
As $p_1$ increases, the expectation and the
variance goes up for the less serious breakdown time but goes down fro
the more serious breakdown time. The total repair cost has lower
expectation and variance for smaller values of $p_1$.
The correlation of the two off-state occupation times is negative,
with a magnitude decreasing as $p_1$ increases.
When $t$ is doubled, the expectation of the occupation times and the
total repair cost is slightly more than doubled, which is because the
process always starts from the on-state with a random holding time.

\begin{table}[tbp]
  \centering
  \caption{Summaries of off-state occupation times and total repair
    cost with $c_U \approx 1.785$, $c_S \approx 0.097$, and
    $c_L \approx 0.275$ as $p_1$ increases.}
  \label{tab:expc}
  \addtolength\tabcolsep{-2pt}
\begin{tabular}{rrrrrrrrr}
  \toprule
  $t$ & $p_1$ & $\mathrm{E}(S(t))$ & $\mathrm{Var}(S(t))$ & $\mathrm{E}(L(t))$ & $\mathrm{Var}(L(t))$ & $\mathrm{E}(C(t))$ & $\mathrm{Var}(C(t))$ & $\rho (S(t), L(t))$ \\
  \midrule
  30 & 0.70 & 0.81 & 10.82 & 0.95 & 13.01 & 2.71 & 60.99 & $-$0.040 \\
   & 0.75 & 0.87 & 11.60 & 0.79 & 10.95 & 2.46 & 53.71 & $-$0.037 \\
   & 0.80 & 0.93 & 12.38 & 0.64 & 8.85 & 2.20 & 46.32 & $-$0.035 \\
   & 0.85 & 0.99 & 13.15 & 0.48 & 6.70 & 1.95 & 38.80 & $-$0.031 \\
   & 0.90 & 1.05 & 13.93 & 0.32 & 4.51 & 1.69 & 31.16 & $-$0.026 \\
   & 0.95 & 1.11 & 14.70 & 0.16 & 2.28 & 1.43 & 23.38 & $-$0.019 \\
   & 0.99 & 1.16 & 15.32 & 0.03 & 0.46 & 1.23 & 17.07 & $-$0.009 \\
  60 & 0.70 & 1.75 & 48.20 & 2.08 & 57.96 & 5.92 & 271.58 & $-$0.040 \\
   & 0.75 & 1.88 & 51.68 & 1.74 & 48.81 & 5.37 & 239.32 & $-$0.038 \\
   & 0.80 & 2.02 & 55.16 & 1.40 & 39.46 & 4.82 & 206.48 & $-$0.035 \\
   & 0.85 & 2.15 & 58.64 & 1.05 & 29.91 & 4.26 & 173.04 & $-$0.031 \\
   & 0.90 & 2.29 & 62.13 & 0.70 & 20.16 & 3.70 & 139.01 & $-$0.026 \\
   & 0.95 & 2.42 & 65.62 & 0.35 & 10.19 & 3.13 & 104.37 & $-$0.019 \\
   & 0.99 & 2.53 & 68.41 & 0.07 & 2.05 & 2.67 & 76.21 & $-$0.009 \\

 \bottomrule
\end{tabular}
\end{table}

\section{Concluding Remarks}

The obtained results can be generalized to the case of more than two
off-states. The only difference is that instead of binomial
probabilities one has to use multinomial ones.

The marginal distribution of an off-state occupation time could be
obtained by collapsing the on-state with the other off-state and using
results on the telegraph process.
Nonetheless, this approach would lead to more complicated
formulas because the holding time of the new collapsed state is
distributed as an infinite mixture of convolutions. And of course,
this technique cannot be employed if we need the joint distribution
of the two off-state occupation times.

The process $C(t)$ also can be viewed as a telegraph process governed
by the on-off process with multiple off-states. That is,
$C(t)$ is the time $t$ position of a particle that moves with speed
$C_j$ whenever $X(t)=j$ ($j=0,1,2$). The formulas for the joint
distribution of $X(t)$ and occupation times can be useful for
parameter estimation when process $C(t)$ is discretely observed.
In particular, if $X(t)$ is also Markov (that is, all the holding
times are exponential) then the efficient likelihood
estimation is possible with help of tools for hidden Markov model
\citep{Pozd:etal:2018}.

\bibliographystyle{mcap}
\bibliography{OTTP}

\begin{thebibliography}{19}
\newcommand{\enquote}[1]{``#1''}
\expandafter\ifx\csname natexlab\endcsname\relax\def\natexlab#1{#1}\fi

\bibitem[{Bshouty et~al.(2012)Bshouty, Di~Crescenzo, Martinucci and
  Zacks}]{Bshouty:etal:2012}
D.~Bshouty, A.~Di~Crescenzo, B.~Martinucci and S.~Zacks (2012).
\newblock \enquote{Generalized telegraph process with random delays.}
\newblock {\em Journal of Applied Probability\/} {\bf 49}, 850--865.

\bibitem[{Cane(1959)}]{Cane:beha:1959}
V.~R. Cane (1959).
\newblock \enquote{Behavior sequences as semi-{M}arkov chains.}
\newblock {\em Journal of the Royal Statistical Society, Series B\/} {\bf 21},
  36--58.

\bibitem[{Crimaldi et~al.(2013)Crimaldi, Di~Crescenzo, Iuliano and
  Martinucci}]{Crimaldi:etal:2013}
I.~Crimaldi, A.~Di~Crescenzo, A.~Iuliano and B.~Martinucci (2013).
\newblock \enquote{A generalized telegraph process with velocity driven by
  random trials.}
\newblock {\em Advances in Applied Probability\/} {\bf 45}, 1111--1136.

\bibitem[{De~Gregorio and Macci(2012)}]{DeGregorio:etal:2012}
A.~De~Gregorio and C.~Macci (2012).
\newblock \enquote{Large deviation principles for telegraph processes.}
\newblock {\em Statistics \& Probability Letters\/} {\bf 82}, 1874--1882.

\bibitem[{Di~Crescenzo(2001)}]{DiCrescenzo:2001}
A.~Di~Crescenzo (2001).
\newblock \enquote{On random motions with velocities alternating at
  {E}rlang-distributed random times.}
\newblock {\em Advances in Applied Probability\/} {\bf 33}, 690--701.

\bibitem[{Di~Crescenzo et~al.(2013)Di~Crescenzo, Iuliano, Martinucci and
  Zacks}]{DiCrescenzo:etal:2013}
A.~Di~Crescenzo, A.~Iuliano, B.~Martinucci and S.~Zacks (2013).
\newblock \enquote{Generalized telegraph process with random jumps.}
\newblock {\em Journal of Applied Probability\/} {\bf 50}, 450--463.

\bibitem[{Di~Crescenzo et~al.(2014)Di~Crescenzo, Martinucci and
  Zacks}]{DiCrescenzo:etal:2014}
A.~Di~Crescenzo, B.~Martinucci and S.~Zacks (2014).
\newblock \enquote{On the geometric brownian motion with alternating trend.}
\newblock {\em Mathematical and statistical methods for actuarial sciences and
  finance, Eds. Perna, C. and Sibillo, M.\/} pp. 81--85.

\bibitem[{Di~Crescenzo and Zacks(2015)}]{DiCrescenzo:Zhacs:2015}
A.~Di~Crescenzo and S.~Zacks (2015).
\newblock \enquote{Probability law and flow function of {B}rownian motion
  driven by a generalized telegraph process.}
\newblock {\em Methodology and Computing in Applied Probability\/} {\bf 17},
  761--780.

\bibitem[{Kolesnik and Ratanov(2013)}]{Kolesnik:Ratanov:2013}
A.~D. Kolesnik and N.~Ratanov (2013).
\newblock {\em Telegraph processes and option pricing\/}.
\newblock Springer Briefs in Statistics. Springer, Heidelberg.

\bibitem[{Macci(2016)}]{Macci:2016}
C.~Macci (2016).
\newblock \enquote{Large deviations for some non-standard telegraph processes.}
\newblock {\em Statistics \& Probability Letters\/} {\bf 110}, 119--127.

\bibitem[{Newman(1968)}]{Newman:1968}
D.~S. Newman (1968).
\newblock \enquote{On the probability distribution of a filtered random
  telegraph signal.}
\newblock {\em The Annals of Mathematical Statistics\/} {\bf 39}, 890--896.

\bibitem[{Page(1960)}]{Page:theo:1960}
E.~S. Page (1960).
\newblock \enquote{Theoretical considerations of routine maintenance.}
\newblock {\em The Computer Journal\/} {\bf 2}, 199--204.

\bibitem[{Perry et~al.(1999)Perry, Stadje and Zacks}]{Perr:Stad:Zack:firs:1999}
D.~Perry, W.~Stadje and S.~Zacks (1999).
\newblock \enquote{First-exit times for increasing compound processes.}
\newblock {\em Communications in Statistics: Stochastic Models\/} {\bf 15},
  977--992.

\bibitem[{Pozdnyakov et~al.(2018)Pozdnyakov, Elbroch, Hu, Meyer and
  Yan}]{Pozd:etal:2018}
V.~Pozdnyakov, L.~Elbroch, C.~Hu, T.~Meyer and J.~Yan (2018).
\newblock \enquote{On estimation for {B}rownian motion governed by telegraph
  process with multiple off states.}
\newblock {\em ArXiv e-prints\/} .

\bibitem[{Pozdnyakov et~al.(2017)Pozdnyakov, Elbroch, Labarga, Meyer and
  Yan}]{Pozd:etal:2017}
V.~Pozdnyakov, L.~Elbroch, A.~Labarga, T.~Meyer and J.~Yan (2017).
\newblock \enquote{Discretely observed {B}rownian motion governed by telegraph
  process: estimation.}
\newblock {\em Methodology and Computing in Applied Probability\/} {t}o appear.

\bibitem[{Ratanov(2017)}]{Ratanov:2017}
N.~Ratanov (2017).
\newblock \enquote{Piecewise linear process with renewal starting points.}
\newblock {\em Statistics \& Probability Letters\/} {\bf 131}, 78--86.

\bibitem[{Stadje and Zacks(2004)}]{Stad:Zack:tele:2004}
W.~Stadje and S.~Zacks (2004).
\newblock \enquote{Telegraph processes with random velocities.}
\newblock {\em Journal of Applied Probability\/} {\bf 41}, 665--678.

\bibitem[{Xu et~al.(2015)Xu, De and Zacks}]{Xu:etal:2015}
Y.~Xu, S.~K. De and S.~Zacks (2015).
\newblock \enquote{Exact distribution of intermittently changing positive and
  negative compound {P}oisson process driven by an alternating renewal process
  and related functions.}
\newblock {\em Probability in the Engineering and Informational Sciences\/}
  {\bf 29}, 385--397.

\bibitem[{Zacks(2004)}]{Zack:gene:2004}
S.~Zacks (2004).
\newblock \enquote{Generalized integrated telegraph processes and the
  distribution of related stopping times.}
\newblock {\em Journal of Applied Probability\/} {\bf 41}, 497--507.

\end{thebibliography}

\end{document}